\newenvironment{nouppercase}{%
  \renewcommand{\uppercasenonmath}[1]{}}{}
\renewcommand{\eqref}[1]{\textup{(\ignorespaces\ref{#1}\unskip\@@italiccorr)}}
\def\maketag@@@#1{\hbox{\m@th\normalfont\bfseries#1}}
\numberwithin{equation}{section}
\numberwithin{figure}{section}
\theoremstyle{plain}
\newtheorem{thm}[equation]{Theorem}
\newtheorem{lemma}[equation]{Lemma}
\newtheorem{cor}[equation]{Corollary}
\newtheorem{prop}[equation]{Proposition}
\theoremstyle{definition}
\newtheorem{definition}[equation]{Definition}
\newcommand{\R}{\ensuremath \mathbb{R}}
\newcommand{\Z}{\ensuremath \mathbb{Z}}
\newcommand{\N}{\ensuremath \mathbb{N}}
\DeclareMathOperator{\rank}{rank}
\begin{document}

\title[Euclidean domains with no multiplicative norms]{Euclidean domains with no multiplicative norms}

\author{Caleb J.\ Dastrup}
\address{Department of Mathematics, Brigham Young University, Provo, UT 84602, USA}
\email{calebjdastrup@gmail.com}

\author{Pace P.\ Nielsen}
\address{Department of Mathematics, Brigham Young University, Provo, UT 84602, USA}
\email{pace@math.byu.edu}

\keywords{compatibly ordered monoid, Euclidean domain, Euclidean norm, unique factorization domain}
\subjclass[2020]{Primary 13F07, Secondary 06F05, 11A05, 13A05, 13F15}

\begin{abstract}
We construct a Euclidean domain with no multiplicative Euclidean norm to a compatibly well-ordered monoid, and hence with no multiplicative Euclidean norm to $\R$ (under its usual order).

A key step in the proof is showing that the UFD property is preserved when adjoining a free factorization.
\end{abstract}

\begin{nouppercase}
\maketitle
\end{nouppercase}

\section{Introduction}

In this paper, we study the following well-known class of rings.

\begin{definition}
An integral domain $R$ is called a \emph{Euclidean domain} when there exists a function $\varphi\colon R-\{0\}\to \N$ such that for each pair $(a,b)\in R^2$ with $b\neq 0$ and $b\nmid a$, there exist some $q\in R$ satisfying
\[
\varphi(a-qb)<\varphi(b).
\]
Any such function $\varphi$ is called a \emph{Euclidean norm} on $R$.
\end{definition}

Readers may have seen Euclidean domains defined using a slightly different definition, perhaps with the Euclidean norm defined on $0$ (just as some authors allow the zero polynomial to have a degree).  As shown in \cite{AF}, any such minor discrepancy causes no problems.

The historical reason for mapping to $\N$ is to take advantage of the fact that it possesses a well-ordering $<$, which was used in the defining condition for the Euclidean norm.  In the definition above, if we replace $(\N,<)$ by an arbitrary well-ordered set $(I,<)$, we get the class of \emph{transfinite Euclidean domains}, with their corresponding Euclidean norms.  There are transfinite Euclidean domains that possess no $(\N,<)$-valued Euclidean norms.  In fact, for each ordinal $\alpha$, there is a transfinite Euclidean domain such that the images of its Euclidean norms must each have length larger than $\alpha$.  For more information on this topic, the reader is directed to \cite{NielsenEuclid} and the references therein.

It has been a longstanding open problem whether or not every Euclidean domain has a \emph{multiplicative} Euclidean norm, meaning that the image of $\varphi$ is in (a well-ordered subset of) a monoid $(M,\cdot,1)$ and
\[
\varphi(ab)=\varphi(a)\cdot \varphi(b)\ \text{ for all $a,b\in R-\{0\}$}.
\]
As shown recently in \cite[Section 5]{NielsenEuclid}, the answer is no if we force the monoid to be $(\N-\{0\},\cdot,1)$, under its usual ordering.  However, the ring constructed there does have a multiplicative Euclidean norm with values in a well-ordered submonoid of $(\R_{\geq 1},\cdot, 1)$, under its usual ordering.

This raises the question of whether multiplicativity holds for some Euclidean norm, if we allow values in arbitrary monoids.  In a technical---but somewhat trivial---sense the answer is yes, as the following proposition demonstrates.

\begin{prop}
Let $R$ be a Euclidean domain.  There is a well-ordering $\prec$ on $R-\{0\}$ such that with respect to this ordering the identity map on $R-\{0\}$ is a \textup{(}possibly transfinite\textup{)} Euclidean norm for $R$.
\end{prop}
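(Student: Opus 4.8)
The plan is to start from an ordinary $\N$-valued Euclidean norm $\varphi$ on $R$, which exists by the very definition of a Euclidean domain, and then to refine the pre-order it induces on $R-\{0\}$ into a genuine well-ordering $\prec$ that the identity map can respect. The crucial observation driving the whole argument is that \emph{any} refinement works: if $\prec$ is a well-ordering on $R-\{0\}$ with the property that $\varphi(x)<\varphi(y)$ implies $x\prec y$, then the Euclidean inequality transfers automatically. Indeed, given $(a,b)$ with $b\neq 0$ and $b\nmid a$, the norm $\varphi$ supplies some $q\in R$ with $\varphi(a-qb)<\varphi(b)$; here $a-qb\neq 0$ precisely because $b\nmid a$, so the value $\varphi(a-qb)$ is defined, and the refinement hypothesis yields $a-qb\prec b$. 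This is exactly the condition for the identity map to be a Euclidean norm with respect to $\prec$.

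To construct such a refinement, I would fix, using the well-ordering theorem, an arbitrary well-ordering $\ll$ of the set $R-\{0\}$, to be used solely for breaking ties. Then define $x\prec y$ to mean that either $\varphi(x)<\varphi(y)$, or else $\varphi(x)=\varphi(y)$ and $x\ll y$. This is visibly a total order on $R-\{0\}$ refining $\varphi$ in the sense above, so by the previous paragraph it only remains to verify that $\prec$ is in fact a well-ordering.

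For that step, I would take an arbitrary nonempty subset $S\subseteq R-\{0\}$, first let $n$ be the least value attained by $\varphi$ on $S$ (which exists because $\varphi(S)$ is a nonempty subset of the well-ordered set $\N$), and then let $m$ be the $\ll$-least element of the nonempty set $\{x\in S:\varphi(x)=n\}$. A direct check shows that $m$ is the $\prec$-least element of $S$: any element of $S$ with smaller $\varphi$-value would contradict minimality of $n$, and any element of $S$ with $\varphi$-value equal to $n$ is $\ll$-above $m$ and hence $\prec$-above $m$. Thus every nonempty subset of $R-\{0\}$ has a $\prec$-least element.

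The argument is short, and the only genuinely nontrivial ingredient is the appeal to the axiom of choice (through the well-ordering theorem) to furnish the tie-breaking order $\ll$; everything else is a routine verification that I do not expect to pose any real obstacle. This also explains the paper's framing of the result as holding only in a ``somewhat trivial'' sense: the well-ordering $\prec$ is constructed by brute force and carries no multiplicative structure whatsoever, which is exactly why the substantive problem is the one concerning norms valued in a (well-ordered) monoid.
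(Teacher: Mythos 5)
Your proposal is correct and follows essentially the same route as the paper: both constructions order $R-\{0\}$ lexicographically by $\varphi$-value first and then by an arbitrarily chosen well-ordering within each fiber $\varphi^{-1}(n)$, and both observe that the Euclidean inequality $\varphi(a-qb)<\varphi(b)$ immediately yields $a-qb\prec b$. Your write-up merely makes explicit the verification that $\prec$ is a well-ordering, which the paper states without proof.
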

\begin{proof}
Let $\varphi\colon R-\{0\}\to \N$ be any Euclidean norm on $R$.  For each $n\in \N$, fix a well-ordering $\prec_n$ on $\varphi^{-1}(n)$. Define an ordering $\prec$ on $R-\{0\}$ by saying that $x\prec y$ when either
\begin{itemize}
\item[(1)] $x,y\in \varphi^{-1}(n)$, for some $n\in \N$, and $x\prec_n y$, or
\item[(2)] $x\in \varphi^{-1}(m)$ and $y\in \varphi^{-1}(n)$ for some $m,n\in \N$ with $m<n$.
\end{itemize}
The ordering $\prec$ is a well-ordering.

We end by showing that with respect to this ordering, the identity map on $R-\{0\}$ is a Euclidean norm.  Let $a,b\in R$ with $b\neq 0$ and with $b\nmid a$.  There exists some $q\in R$ with $\varphi(a-qb)<\varphi(b)$.  In particular, $a-qb \prec b$.
\end{proof}

There is a significant difference between the well-ordering $\prec$ defined in the previous proposition on the multiplicative monoid $(R-\{0\},\cdot,1)$, and the well-ordering $<$ on the multiplicative monoid $(\N-\{0\},\cdot,1)$.  In the later case, the monoid multiplication is \emph{compatible} with the ordering, in the sense that
\begin{equation}\label{Eq:Compatibility}
a\leq b\Longrightarrow ac\leq bc\ \text{ for all $a,b,c$ in the monoid}.
\end{equation}
(The choice to use the nonstrict order $\leq$ when defining the compatibility condition is deliberate, as it is less stringent.)  The usual ordering on $\R_{\geq 1}$ is also compatible with multiplication.

Do Euclidean domains always have multiplicative Euclidean norms to compatibly well-ordered monoids?  We answer this question in the negative, thus also showing that there are Euclidean domains with no $\R$-valued, multiplicative Euclidean norms (still assuming the values lie in a well-ordered subset, and that $\R$ is given its usual ordering).  The remainder of the paper will be devoted to proving:

\begin{thm}\label{Thm:Main}
There exists a Euclidean domain $R$ that has no multiplicative Euclidean norm to a compatibly well-ordered monoid.
\end{thm}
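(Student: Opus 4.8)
The plan is to first pin down how any multiplicative Euclidean norm $\varphi$ to a compatibly well-ordered monoid $(M,\cdot,\preceq)$ must behave, and then to build a Euclidean domain whose divisibility and factorization structure is incompatible with every such behavior.

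For the structural analysis, I would replace $M$ by the submonoid generated by the image of $\varphi$, so that $\varphi(1)$ becomes the identity $\epsilon$ of $M$. The crucial observation is that in a compatibly well-ordered monoid the only invertible element is $\epsilon$: if $m$ were invertible with $m\succ\epsilon$, then its inverse satisfies $m^{-1}\prec\epsilon$, and compatibility forces $\cdots\prec m^{-2}\prec m^{-1}\prec\epsilon$, an infinite strictly descending sequence, contradicting well-ordering (and symmetrically if $m\prec\epsilon$). Since $\varphi(u)\varphi(u^{-1})=\varphi(1)=\epsilon$, every unit $u$ of $R$ has $\varphi(u)=\epsilon$. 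The Euclidean axiom then upgrades this: dividing $1$ by a non-unit $b$ (so $b\nmid 1$) produces $q$ with $\varphi(1-qb)\prec\varphi(b)$ and $1-qb\neq 0$; taking $b$ of least norm shows $\epsilon$ is the minimum of the image and that $\varphi(a)=\epsilon$ exactly when $a$ is a unit, so every non-unit has norm strictly above $\epsilon$. Finally, since $\epsilon$ is the minimum, any nonzero multiple $r=bc$ satisfies $\varphi(r)=\varphi(b)\varphi(c)\succeq\varphi(b)$.

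With these facts in hand, the engine for a contradiction is the interplay of two inequalities: the division axiom manufactures \emph{strict} inequalities (if $b\nmid a$ then some nonzero remainder has norm $\prec\varphi(b)$), while divisibility manufactures \emph{weak} lower bounds (if $c\mid r\neq 0$ then $\varphi(r)\succeq\varphi(c)$). The goal is to force an infinite strictly descending sequence $\varphi(b_1)\succ\varphi(b_2)\succ\cdots$ in $M$, which is absurd. The decisive subtlety---flagged already by the paper's insistence on nonstrict compatibility---is that multiplying by a non-unit need not strictly raise the norm, so a naive descending chain of divisors is both unavailable (a UFD admits no infinite proper chain of divisors) and insufficient (it would yield only weak inequalities). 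The descent must therefore live in the value monoid $M$, be driven by the division axiom, and be spread across infinitely many non-associate irreducibles rather than a single tower of divisors. To realize such a configuration I would build $R$ as a direct limit of UFDs, at each stage adjoining a free factorization of a suitable element so as to install the next division relation the descent requires, invoking the key lemma that adjoining a free factorization preserves the UFD property. This keeps factorization finite and fully controlled at every finite stage---so the unit group and the divisibility relation stay understood in the limit---which is precisely what lets the structural lemmas above pin down the values of any hypothetical $\varphi$. I would then confirm that $R$ is Euclidean by exhibiting an explicit (necessarily non-multiplicative, transfinite- or $\R$-valued) norm read off from the filtration of the construction.

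I expect the main obstacle to be twofold. The first is the flagged lemma itself: controlling the primes of an extension of the form $R[x,y]/(xy-a)$ and its iterates, and checking that the UFD property survives passage to the limit. The second, and more conceptual, is engineering the division relations so that the strict-descent mechanism genuinely iterates infinitely: one must reconcile an infinite strictly descending sequence of norms with the finiteness of factorization lengths in a UFD, which means the descent cannot be a chain of divisors in $R$ but must instead be forced, through the Euclidean axiom, among infinitely many non-associate irreducibles whose residues interlock. Making that interlocking rigid enough to defeat every compatibly well-ordered target---including $\R_{\geq 1}$, where compatibility is in fact strict---is where the real work lies.
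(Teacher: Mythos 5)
Your structural analysis of the target monoid is correct and matches the paper's (the identity is the minimum, so units map to the minimum and divisibility yields only weak lower bounds), and your choice of building block---iterated adjunction of free factorizations, with the UFD property preserved at each stage and in the limit---is exactly the paper's Section \ref{Section:UFD}. But the proposal stops short of the two ideas that actually carry the proof, and the contradiction mechanism you do propose is not the one that works. You aim to manufacture an infinite strictly descending sequence $\varphi(b_1)\succ\varphi(b_2)\succ\cdots$ in $M$. The difficulty is that the division axiom only asserts the existence of \emph{some} good remainder; to keep a descent going you would have to control \emph{every} possible remainder at every stage, and nothing in your outline explains how the construction forces that. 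The paper instead uses well-ordering in the opposite direction: choose a prime $p$ with $\psi(p)$ minimal among all primes, and divide a fresh indeterminate $t_{\rank(p)+1}$ by $p^2$. The decisive lemma (Proposition \ref{Prop:EnoughStableDivisors}) is that \emph{every} admissible remainder $vt_{\rank(p)+1}-qp^2$ has at least two stable prime factors counting multiplicity, whence every remainder has norm at least $\psi(p)\psi(p)=\psi(p^2)$, contradicting the existence of one with norm strictly below $\psi(p^2)$. This ``divide by the square of a minimal-norm prime and show all remainders have two prime factors'' step is the heart of the argument and is absent from your plan; verifying it is also where the three types of indeterminates and the rank bookkeeping earn their keep.

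The second gap is that your outline would not establish that $R$ is a Euclidean domain in the sense required. You propose to exhibit ``an explicit (necessarily non-multiplicative, transfinite- or $\R$-valued) norm,'' but the theorem demands an $\N$-valued norm; a transfinite-valued norm would only make $R$ a transfinite Euclidean domain, a strictly weaker conclusion. Making $R$ genuinely Euclidean is where most of the engineering goes: for each coprime pair $(a,b)$ with $b$ squarefree and free of temporary primes, the paper adjoins an indeterminate $u$ so that $a-ub$ becomes a unit after a final localization at the ``unit primes,'' making division by such a $b$ terminate immediately; and when $b$ has a repeated prime factor, dividing by a fresh $t$-indeterminate leaves a remainder that is a product of exactly two primes. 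The $\N$-valued norm $u\prod_i p_i^{\alpha_i}\mapsto\sum_i\alpha_i^2$ then witnesses Euclideanity because $2<\alpha^2$ for $\alpha\geq 2$ and $\alpha^2+\beta^2\leq(\alpha+\beta)^2$. Without this localization---and the partition into stable, temporary, and unit primes that keeps it compatible with Proposition \ref{Prop:EnoughStableDivisors}---neither half of the theorem goes through.
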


In Section \ref{Section:UFD} we state some important properties of UFDs that we will need in our construction.  Readers may find Lemma \ref{Lemma:Pumping} of independent interest.  In Section \ref{Section:MainConstruction} we construct an explicit ring that satisfies Theorem \ref{Thm:Main}.  Finally, in Section \ref{Section:MultMono} we generalize the main construction and result slightly.  All rings in this paper are associative, commutative, and unital.  The set $\N$ is assumed to contain $0$.

\section{UFDs and splitting primes}\label{Section:UFD}

The following well-known lemma will be used without further comment.

\begin{lemma}\label{Lemma:PolyUFD}
A polynomial ring in arbitrarily many variables over a UFD is a UFD.
\end{lemma}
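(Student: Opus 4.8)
The plan is to reduce everything to the one-variable case, for which I would invoke the classical theorem of Gauss: if $R$ is a UFD, then so is $R[x]$. (If a self-contained argument were wanted, one proves Gauss's lemma that a product of primitive polynomials is primitive, identifies the irreducibles of $R[x]$ as the irreducibles of $R$ together with the primitive polynomials that are irreducible over the fraction field $K$ of $R$, and then transfers the unique factorization available in the PID $K[x]$.) Granting the one-variable case, the statement for finitely many variables follows by induction, since $R[x_1,\dots,x_n]=R[x_1,\dots,x_{n-1}][x_n]$ and a polynomial ring in a single variable over a UFD is again a UFD.

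For the full statement I must handle arbitrarily---in particular, infinitely---many variables. Write $S=R[\{x_i\}_{i\in I}]$ for an arbitrary index set $I$. The key structural observation is that $S$ is the directed union of its subrings $T=R[x_{i_1},\dots,x_{i_n}]$, as $\{i_1,\dots,i_n\}$ ranges over the finite subsets of $I$, because every polynomial involves only finitely many variables. Each such $T$ is a UFD by the finite-variable case, and I would exploit this via two compatibility facts. First, units are detected in $R$: in a domain a unit of any polynomial ring must have degree zero in every variable, so the units of $S$ and of every finite subring all coincide with the units of $R$. Second, irreducibility is stable in the union: if $p$ lies in a finite subring $T$ and factors as $p=fg$ in $S$, then $f$ and $g$ involve only finitely many variables, so the equation already holds in some larger finite subring $T'\supseteq T$; comparing, for each variable outside $T$, the (additive, since we are in a domain) degrees of $f$, $g$, and $p$ forces $f,g\in T$, whence irreducibility of $p$ in $T$ makes one factor a unit. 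Thus an element of a finite subring is irreducible there if and only if it is irreducible in $S$.

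With these facts the UFD axioms for $S$ follow by pushing each assertion down into a finite subring. Given a nonzero nonunit $f\in S$, pick a finite subring $T$ containing it; since units agree, $f$ is a nonzero nonunit of the UFD $T$, hence factors into irreducibles of $T$, which remain irreducible in $S$. For uniqueness, any two irreducible factorizations of $f$ in $S$ together involve only finitely many variables, so both live in a common finite subring $T$; by stability their factors are irreducible in $T$, and the UFD property of $T$ (with units agreeing) yields the matching up to units and reordering. The only substantive input is Gauss's one-variable theorem; once that and the elementary degree bookkeeping are in place, the passage to infinitely many variables is routine directed-union reasoning, and I expect that bookkeeping---keeping careful track of where units and irreducibles live---rather than any deep idea to be the main thing to get right.
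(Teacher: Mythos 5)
Your proof is correct. The paper does not actually prove this lemma---it is stated as well known and used without further comment---so there is no argument to compare against; your write-up is the standard one. The reduction to Gauss's one-variable theorem, the induction for finitely many variables, and the directed-union argument (with the two compatibility checks that units are detected in $R$ and that irreducibility in a finite-variable subring is equivalent to irreducibility in the full ring, via additivity of $\deg_{x_j}$ in a domain) are all sound, and together they do establish both existence and uniqueness of factorizations in $R[\{x_i\}_{i\in I}]$.
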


Although adjoining polynomial variables causes no problems when working with UFDs, if we adjoin a factor of a prime to a UFD, then the new ring does not need to be a UFD.  The classic example is
\[
\Z\subseteq \Z[\sqrt{-5}].
\]
Surprisingly, when adjoining new factors as freely as possible (in the universal algebra sense), there is no such problem.  To motivate that result, we first discuss some notational conventions that will be used throughout this paper.

Let $R$ be a nonzero ring, let $p\in R$, and let $x$ and $y$ be two algebraically independent polynomial indeterminates.  Consider the ring
\[
R[x,y]/(xy-p).
\]
This is the ring where we have freely adjoined a factorization of $p$.  Given any coset in $R[x,y]/(xy-p)$, it is represented by an $R$-linear combination of $1$, positive powers of $x$, and positive powers of $y$; remove cross terms by repeatedly replacing copies of $xy$ by $p$.  This representative in unique since every nonzero element in the ideal $(xy-p)$ has a cross term.

Treating $x$ as having grade $1$, treating $y$ as having grade $-1$, and treating the elements of $R$ as having grade $0$, we see that $xy-p$ is homogeneous of grade $0$.  Thus, there is an induced $\Z$-grading on $R[x,y]/(xy-p)$.

\begin{lemma}\label{Lemma:Domainness}
If $R$ is an integral domain and $p\in R-\{0\}$, then $R[x,y]/(xy-p)$ is an integral domain.
\end{lemma}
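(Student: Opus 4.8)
The plan is to exploit the $\Z$-grading on $S := R[x,y]/(xy-p)$ described just above the lemma. Write $S = \bigoplus_{d \in \Z} S_d$ for its decomposition into homogeneous components. By the unique-representation statement, each $S_d$ is a free $R$-module of rank one: setting $m_0 = 1$, $m_d = x^d$ for $d > 0$, and $m_d = y^{-d}$ for $d < 0$, we have $S_d = R\, m_d$, and in particular $a\, m_d = 0$ forces $a = 0$. Moreover each element of $S$ is a \emph{finite} $R$-linear combination of the $m_d$, so every nonzero element has a well-defined largest grade in which its homogeneous component is nonzero.

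First I would record how the generators multiply. A direct computation using the relation $xy = p$ shows that for all $d, e \in \Z$,
\[
m_d\, m_e = p^{k(d,e)}\, m_{d+e}
\]
for some integer $k(d,e) \geq 0$: when $d$ and $e$ have the same sign one simply adds exponents, giving $k(d,e) = 0$, and when they have opposite signs one pairs off $\min(|d|,|e|)$ copies of $xy$ into powers of $p$, leaving a single power of $x$ or of $y$ of grade $d+e$. From this follows the key multiplicative fact: for nonzero $a, b \in R$,
\[
(a\, m_d)(b\, m_e) = a b\, p^{k(d,e)}\, m_{d+e},
\]
and $a b\, p^{k(d,e)} \neq 0$ because $R$ is a domain and $p \neq 0$. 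Hence the product of two nonzero homogeneous elements is again a nonzero homogeneous element of the summed grade.

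Finally I would run the standard leading-component argument. Given nonzero $u, v \in S$, let $u_d, v_e$ denote their homogeneous components, and let $d_1$ (resp.\ $e_1$) be the largest grade with $u_{d_1} \neq 0$ (resp.\ $v_{e_1} \neq 0$). Since $S_d S_e \subseteq S_{d+e}$, the grade-$(d_1+e_1)$ component of $uv$ is $\sum_{d+e = d_1+e_1} u_d v_e$; but $d \leq d_1$ and $e \leq e_1$ together with $d+e = d_1+e_1$ force $d = d_1$ and $e = e_1$, so this component equals $u_{d_1} v_{e_1}$, which is nonzero by the previous paragraph. Therefore $uv \neq 0$, and $S$ is an integral domain.

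The only genuine work is the bookkeeping in the identity $m_d m_e = p^{k}\, m_{d+e}$; once the nonvanishing of products of homogeneous elements is established, the leading-component argument is routine. One can sidestep the case analysis altogether by instead verifying that $x \mapsto x$, $y \mapsto p\, x^{-1}$ defines an embedding of $S$ into the Laurent polynomial ring $\mathrm{Frac}(R)[x, x^{-1}]$, which is a domain; I would regard the graded argument as the more natural route here, since it uses exactly the structure already set up.
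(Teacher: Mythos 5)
Your proof is correct and takes exactly the route the paper intends: its entire proof reads ``Use the standard leading terms argument, under the $\Z$-grading,'' and you have simply written that argument out in full, including the one point where $p\neq 0$ is actually used (nonvanishing of products of homogeneous elements of opposite grades). No discrepancies to report.
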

\begin{proof}
Use the standard leading terms argument, under the $\Z$-grading.
\end{proof}

Hereafter, we will refer to integral domains just as \emph{domains}, and we will continue to assume that $R$ is a domain and that $p\neq 0$.  The ring $R$ is naturally isomorphic to a subring of $R[x,y]/(xy-p)$.  Thus, we find it convenient to change notation and work with the isomorphic ring
\[
R':=R[s,s'\, :\, ss'=p]
\]
where $R$ is an actual subring. (This also streamlines the coset notation.)

Throughout this paper we will refer to $s$ and $s'$ as \emph{conjugates}. As $R'$ is a domain, it has a field of fractions.  We may as well identify $s'$ with $p/s$.  Thus, we will often write this ring as
\[
R'=R[s,p/s]\subseteq R[s,s^{-1}].
\]
Hence, elements of $R'$ can be written as Laurent polynomials in $s$, with coefficients from $R$.  However, keep in mind that the conjugates $s$ and $s'$ play dual roles, so we could just as easily have written
\[
R'=R[s',p/s']\subseteq R[s',s'^{-1}].
\]

Since $R'$ is a $\Z$-graded ring, we define the \emph{spread} of a nonzero element $a\in R'$ as the difference between the grade of the leading term of $a$ and the grade of the lowest term of $a$, writing ${\rm spread}(a)$.  Thus, nonzero homogeneous elements are exactly the elements with spread $0$; all other nonzero elements have positive spread.  Since $R'$ is a domain, then given $a,b\in R'-\{0\}$ we have
\begin{equation}\label{Eq:SpreadMultToAdd}
{\rm spread}(ab)={\rm spread}(a)+{\rm spread}(b).
\end{equation}

The following lemma collects some additional basic facts about the ring $R'$, under additional hypotheses.

\begin{lemma}\label{Lemma:Pumping}
Let $R$ be a UFD, and let $p$ be a prime in $R$.  Put $R':=R[s,s'\, :\, ss'=p]$.  The following hold:
\begin{itemize}
\item[\textup{(1)}] Both $s$ and $s'$ are prime in $R'$, and they are not associate.
\item[\textup{(2)}] Any prime $q\in R$ that is not associate to $p$ remains prime in $R'$.
\item[\textup{(3)}] The units in $R'$ are exactly the units in $R$ \textup{(}i.e., $U(R')=U(R)$\textup{)}.  In particular, the associates of any $r\in R$ are the same in both $R$ and $R'$.
\item[\textup{(4)}] The ring $R'$ is a UFD.
\end{itemize}
\end{lemma}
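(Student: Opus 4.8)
The plan is to treat parts \textup{(1)}--\textup{(3)} by direct computations with the $\Z$-grading, reserving the real work for the UFD statement in part \textup{(4)}.

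For part \textup{(3)} I would first note that in a $\Z$-graded domain every unit is homogeneous: if $ab=1$ then \eqref{Eq:SpreadMultToAdd} forces ${\rm spread}(a)={\rm spread}(b)=0$. A homogeneous unit of grade $k$ has a homogeneous inverse of grade $-k$, and multiplying a generator $s^{k}$ of grade $k>0$ by a generator $s'^{k}$ of grade $-k$ produces $p^{k}$, a nonunit of $R$; hence $k=0$ and the unit lies in $R$, giving $U(R')=U(R)$. The non-associativity claims in \textup{(1)} and \textup{(2)} are then immediate, since associate elements differ by a \textup{(}grade $0$\textup{)} unit and so share the same grade, whereas $s$, $s'$, and any $q\in R$ sit in grades $1$, $-1$, and $0$. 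For primality in \textup{(1)} and \textup{(2)} I would pass to the presentation $R[x,y]/(xy-p)$ and compute quotients: setting $x=0$ gives $R'/(s)\cong R[y]/(p)\cong (R/pR)[y]$, a domain because $p$ is prime, so $s$ \textup{(}and by symmetry $s'$\textup{)} is prime; and reducing modulo a prime $q\in R$ with $q\nmid p$ gives $R'/(q)\cong (R/qR)[x,y]/(xy-\overline{p})$, where $\overline{p}\neq 0$ since $q\nmid p$, so Lemma \ref{Lemma:Domainness} applied to the domain $R/qR$ shows this is a domain and $q$ is prime.

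The heart of the matter is part \textup{(4)}, and the natural tool is a Nagata-style descent along the localization that inverts $s$. Since $s'=p/s$, inverting $s$ collapses the relation and yields $R'[s^{-1}]=R[s,s^{-1}]$, a localization of the polynomial ring $R[s]$; by Lemma \ref{Lemma:PolyUFD} this is a UFD. As $s$ is prime in $R'$ by part \textup{(1)}, a Nagata-type descent will give that $R'$ is a UFD \emph{provided} $R'$ is atomic, so the main obstacle is establishing atomicity and correctly transferring unique factorization back across the localization.

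To get atomicity I would build an additive length function $N\colon R'-\{0\}\to\N$ by setting $N(a)={\rm spread}(a)+v_{s}(a)+v_{s'}(a)+\lambda(a)$, where $v_{s},v_{s'}$ are the $s$- and $s'$-adic valuations \textup{(}well defined since $s,s'$ are prime\textup{)} and $\lambda(a)$ counts the prime factors, with multiplicity, of $a$ in the UFD $R[s,s^{-1}]$. Each summand is additive on products, so $N$ is additive; and I would check that $N(a)=0$ exactly when $a$ is a unit, the delicate point being that an element with ${\rm spread}=0$, $v_{s}=v_{s'}=0$, and $\lambda=0$ is forced into grade $0$ and hence into $U(R)$. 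Additivity together with $N(a)\geq 1$ for nonunits bounds the length of every factorization by $N(a)$, yielding atomicity. To conclude I would verify the descent directly rather than quote Nagata as a black box: a prime of $R[s,s^{-1}]$ can be normalized to an element of $R'$ not divisible by $s$, such a normalized element is again prime in $R'$ \textup{(}since $s\nmid\pi$ lets one cancel powers of $s$ after clearing denominators\textup{)}, and any irreducible of $R'$ not associate to $s$ or $s'$ must, after removing a spurious unit $s^{k}$, equal a unit times a single such prime. Thus every irreducible of $R'$ is prime, and with atomicity this gives the UFD property. I expect the bookkeeping around which factors of $s$ and $s'$ are ``visible'' in $R[s,s^{-1}]$ \textup{(}where $s$ is a unit and $s'$ is associate to $p$\textup{)} to be the subtlest part of the argument.
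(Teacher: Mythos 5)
Your treatment of parts \textup{(1)}--\textup{(3)} matches the paper's: the same quotient computations $R'/(s)\cong (R/(p))[y]$ and $R'/(q)\cong (R/(q))[x,y]/(xy-\overline{p})$ for primality, and the same grading argument (homogeneous units of grade $n$ and $-n$ would force $rr'p^n=1$) for part \textup{(3)}; your derivation of the non-associateness claims from \textup{(3)} via grades is a slight, valid shortcut compared with the paper's use of the images of $s'$, $\overline{x}$, $\overline{y}$ in the quotients. For part \textup{(4)} the skeleton is also the same---establish atomicity, then descend from the UFD $R'[s^{-1}]=R[s,s^{-1}]$ via Nagata---but your atomicity argument is genuinely different: the paper bounds the positive-spread factors by ${\rm spread}(a)$ and bounds the homogeneous factors by exhibiting the leading term of $a$ as an explicit finite product of primes $q_1\cdots q_m s^{n+k}s'^{k}$, whereas you build an additive length function $N={\rm spread}+v_s+v_{s'}+\lambda$ with $N(a)=0$ iff $a$ is a unit. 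Your route buys a cleaner ``bounded factorization'' statement and a self-contained proof of the descent; the paper's buys an explicit description of how homogeneous elements factor, which it reuses implicitly later. One justification in your version is off as stated: the finiteness of $v_s$ and $v_{s'}$ does \emph{not} follow merely from $s$ and $s'$ being prime (a prime element of a domain can divide a fixed nonzero element to all orders). It is true here, but the proof needs the grading: if $s^n\mid a$ and the leading term of $a$ is $rs^{j}$ (or $rs'^{j}$), comparing leading terms forces $p^{\,n-j}\mid r$ (respectively $p^{\,n+j}\mid r$) in $R$ for $n$ large, so $v_s(a)$ is bounded by the grade of the leading term plus the $p$-adic valuation of its coefficient. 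With that repair, and with the ``delicate point'' you flag (spread $0$, $v_s=v_{s'}=0$ forces grade $0$, since grade $\pm n$ homogeneous elements are $rs^{n}$ or $rs'^{n}$) written out, your argument is complete and correct.
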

\begin{proof}
(1) By symmetry considerations it suffices to show that $s$ is prime in $R'$ and not associate to $s'$.  We find that
\[
R'/(s)\cong R[x,y]/(xy-p,x)\cong R[y]/(p)\cong (R/(p))[y],
\]
which is a domain since $p$ is prime in $R$.  Further, the image of $s'$ in this domain is nonzero (as it maps to $y$), so $s$ and $s'$ are not associate.

(2) Let $q$ be a prime of $R$ that is not associate to $p$.  We find
\[
R'/(q)\cong (R/(q))[x,y]/(xy-\overline{p}).
\]
The element $\overline{p}\in R/(q)$ is nonzero, since $p$ is prime in $R$ and not associate to $q$.  As $R/(q)$ is a domain, Lemma \ref{Lemma:Domainness} shows that $(R/(q))[x,y]/(xy-\overline{p})$ is a domain.

(3) Let $a,b\in R'$, and assume that $ab=1$.  Since $R'$ is a $\Z$-graded domain, we see that $a$ and $b$ must be homogeneous (of zero spread) and of opposite grades.  If $a$ and $b$ both have grade $0$ we are done, so it suffices to consider the case when $a$ has grade $n\in \N-\{0\}$, and $b$ has grade $-n$.  We then can write $a=r s^{n}$ and $b=r' s'^{n}$ for some nonzero elements $r,r'\in R$.  Now, $rr'p^n=1$ in $R$, which is impossible since $p$ is not a unit in $R$.

(4) First, we will show that every nonzero, nonunit element $a\in R'$ has a factorization into finitely many irreducibles.  In other words, we will show that $R'$ is an atomic domain.

In any factorization of $a$, there are at most ${\rm spread}(a)$ factors with positive spread, by \eqref{Eq:SpreadMultToAdd}.  So we just need to bound the number of homogeneous, nonunit factors in any factorization.  Any homogeneous factor of $a$ is also a factor of the leading term of $a$.  We will show that the leading term is a finite product of primes (rather than merely irreducibles) in $R'$, thus giving the needed bound.

By symmetry considerations, it suffices to consider the case when the leading term of $a$ is $rs^n$, for some $n\in \N$ and some $r\in R-\{0\}$.  Since $R$ is a UFD, we can write
\[
r=uq_1\cdots q_m p^k,
\]
where $u$ is a unit in $R$, the elements $q_1,\ldots, q_m$ are primes in $R$ not associate to $p$ (with $m\in \N$), and $k\in \N$.  Thus, up to a unit, the leading term of $a$ factors as the product $q_1\cdots q_m s^{n+k}s'^{k}$.  These factors are all prime in $R'$, by parts (1) and (2).

We have now shown that $R'$ is an atomic domain.  Also note that
\[
(s)^{-1}R'=R[s,s^{-1}],
\]
which is a UFD, where $s$ is prime in $R'$.  Thus, by Nagata's criterion (see \cite[Theorem 15.61]{ClarkBook}, which generalizes \cite[Lemma 2]{Nagata}) we know that $R'$ is a UFD.
\end{proof}

We want to iteratively use Lemma \ref{Lemma:Pumping}, and the following standard lemma is then useful (cf.\ \cite[Exercise 15.14]{ClarkBook} and \cite[page 7]{Cohn}.)

\begin{lemma}\label{Lemma:UnionUFD}
Let $(I,<)$ be a directed set, and let $(R_i)_{i\in I}$ be a family of UFDs.  Assume for all $i,j\in I$ with $i<j$ that $R_i\subseteq R_j$.  If $R$ is the direct limit of this family, then $U(R)$ is the direct limit of the unit groups $(U(R_i))_{i\in I}$. Further, assuming that for each nonzero, nonunit element $r\in R$, there exists some index $i\in I$ \textup{(}possibly depending on $r$\textup{)} where
\begin{itemize}
\item $r\in R_{i}$, and
\item any prime factor of $r$ in $R_i$ remains prime in $R_j$ for each index $j\geq i$ \textup{(}in other words, prime factorizations eventually stabilize\textup{)},
\end{itemize}
then such factors remain prime in $R$, so $R$ is a UFD.
\end{lemma}
\begin{proof}
Being a direct limit of domains, the set $R$ is also a domain.  The claim about unit groups is also clear.

Given a nonzero, nonunit $r\in R$, fix an index $i\in I$ satisfying the two bullet points.  Write the prime factorization of $r$ in $R_i$ as
\[
r=p_1p_2\cdots p_m.
\]
We will show that each $p_i$ is prime in $R$, thus showing that $R$ is a UFD.  It suffices to show it for $p:=p_1$.

Given $x,y\in R$, assume that $p$ divides $xy$ in $R$.  We may then write $xy=pz$ for some $z\in R$.  There is then some index $j\in I$ with $j\geq i$ such that $r,x,y,z\in R_j$.  Since $p$ is prime in $R_j$, then $p$ is a factor of $x$ or of $y$ in $R_j$.  Thus, the same holds true in $R$.
\end{proof}

An iterative use of Lemma \ref{Lemma:Pumping} must be done wisely, if we hope that the process will result in a UFD.  For example, if we split a prime $p$ into two new primes $s_1$ and $s_1'$, then split $s_1$ into two primes $s_2$ and $s_2'$, and recursively repeat this process infinitely many times, then the resulting ring will not be a UFD. One must avoid infinite chains of nontrivial factors.  We avoid forming such chains by never splitting a given prime more than once.

\begin{prop}\label{Prop:IteratedSplitting}
Let $R$ be a UFD, and let $P:=\{p_i\, :\, i\in I\}$ be a set of nonassociate primes in the ring $R$.  Put $R':=R[s_i,p_i/s_i\, :\, i\in I]$.  The following hold:
\begin{itemize}
\item[\textup{(1)}] For each $i\in I$, both $s_i$ and $p_{i}/s_i$ are prime in $R'$.  Moreover, $s_i$ is not associate to $p_i/s_i$, nor is it associate to either $s_j$ or $p_{j}/s_j$, for each $j\in I$ with $j\neq i$.
\item[\textup{(2)}] Any prime $q\in R$ that is not associate to a prime in $P$ remains prime in $R'$.
\item[\textup{(3)}] The units in $R'$ are exactly the units in $R$.
\item[\textup{(4)}] The ring $R'$ is a UFD.
\end{itemize}
\end{prop}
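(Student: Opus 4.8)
The plan is to deduce this from the single-prime result, Lemma \ref{Lemma:Pumping}, by a transfinite induction in which Lemma \ref{Lemma:UnionUFD} handles the limit stages. First I would fix a well-ordering of the index set, so that without loss of generality $I$ is an ordinal $\kappa$ and $P=\{p_\alpha : \alpha<\kappa\}$. I would then build a tower of rings $(R_\alpha)_{\alpha\le\kappa}$ by setting $R_0:=R$, splitting one prime at each successor step via $R_{\alpha+1}:=R_\alpha[s_\alpha,p_\alpha/s_\alpha]$, and taking $R_\lambda:=\bigcup_{\beta<\lambda}R_\beta$ at each limit stage $\lambda$. Since every element of $R'=R[s_i,p_i/s_i : i\in I]$ is a polynomial in only finitely many of the adjoined conjugates, we have $R'=R_\kappa=\bigcup_{\alpha\le\kappa}R_\alpha$, so it suffices to analyze this tower. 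Throughout, I would maintain by induction on $\alpha$ the invariant that $R_\alpha$ is a UFD, that $U(R_\alpha)=U(R)$, that every not-yet-split prime $p_\gamma$ (with $\alpha\le\gamma<\kappa$) and every prime $q\in R$ not associate to a member of $P$ is still prime in $R_\alpha$, and that the conjugates $\{s_\beta,p_\beta/s_\beta : \beta<\alpha\}$ are prime, pairwise nonassociate, and nonassociate to each such $p_\gamma$ and $q$.

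At a successor step I would apply Lemma \ref{Lemma:Pumping} to the UFD $R_\alpha$ and its prime $p_\alpha$. Parts (1)--(3) supply primality, nonassociativity, and the equality $U(R_{\alpha+1})=U(R_\alpha)$ for the newly adjoined conjugates; part (2) propagates primality and nonassociativity for everything else of interest---the remaining primes $p_\gamma$, the previously adjoined conjugates, and the primes $q$---since none of these is associate to $p_\alpha$; and part (4) keeps $R_{\alpha+1}$ a UFD. The nonassociativity claims persist upward because the unit groups never grow: two elements of a subring $R_\alpha$ that are nonassociate there remain nonassociate in any larger $R_\beta$ once $U(R_\beta)=U(R_\alpha)=U(R)$.

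The delicate part, and the step I expect to be the main obstacle, is the limit stage $\lambda$, where I would invoke Lemma \ref{Lemma:UnionUFD}. Its conclusions on unit groups and on the UFD property are exactly what the invariant needs; primality of a fixed $p_\gamma$ or $q$ in $R_\lambda$ follows from the standard directed-union argument (a witnessing divisibility relation lives in some $R_\beta$ where the element is already prime), and nonassociativity persists because $U(R_\lambda)=U(R)$. The hypothesis that requires genuine care is the stabilization condition of Lemma \ref{Lemma:UnionUFD}: each nonzero nonunit $r$ must eventually acquire a prime factorization that stays a prime factorization at all later stages. The worry is that a prime factor of $r$ could be associate to a prime $p_\gamma$ not yet split, and hence break apart at stage $\gamma$. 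To resolve this I would use that each $R_\beta$ is atomic, so an $r\in R_\beta$ with $\beta<\lambda$ has only finitely many prime factors, hence only finitely many that are associate to not-yet-split primes $p_\gamma$ with $\gamma<\lambda$; choosing an ordinal $\beta'<\lambda$ exceeding all of these finitely many indices (possible since $\lambda$ is a limit), the factorization of $r$ in $R_{\beta'}$ consists entirely of conjugates and of primes not associate to any $p_\gamma$ that is split before stage $\lambda$. By parts (1)--(2) of Lemma \ref{Lemma:Pumping} these factors stay prime at every later successor step, and by the directed-union argument they stay prime at later limit steps, so the factorization is stable. This verifies the hypothesis, $R_\lambda$ is a UFD, and the invariant is preserved. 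Reading off the invariant at $\alpha=\kappa$ then yields conclusions (1)--(4).
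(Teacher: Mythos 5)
Your proposal is correct and follows essentially the same route as the paper: well-order the index set, build the tower $R_\beta=R[s_i,p_i/s_i : i<\beta]$ by transfinite induction, apply Lemma \ref{Lemma:Pumping} at successor stages and Lemma \ref{Lemma:UnionUFD} at limit stages, and verify the stabilization hypothesis by noting that only finitely many prime factors of a given element can be associate to not-yet-split members of $P$. The only (cosmetic) difference is that you pass to a stage $\beta'$ where the factorization has already stabilized, whereas the paper phrases the same finiteness observation as the factorization "stabilizing after finitely many splittings."
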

\begin{proof}
We may as well replace $I$ by an ordinal $\alpha$.  For each ordinal $\beta\leq \alpha$, let
\[
R_{\beta}:=R[s_{i},p_i/s_{i}\, :\, i<\beta].
\]
It suffices to show that each $R_{\beta}$ satisfies the four conditions above, but modified so that $R'$ is replaced with $R_{\beta}$, the index set $I$ is replaced with $\{i\in I\, :\, i<\beta\}$, and the set $P$ is replaced with $\{p_{i}\, :\, i<\beta\}$.  We will work by transfinite induction on $\beta$.

When $\beta=0$, then $R_{\beta}=R$ and all four properties hold.

Next, consider a successor ordinal $\beta+1$, and assume all four properties hold for $R_{\beta}$.  Then they hold for $R_{\beta+1}$ by Lemma \ref{Lemma:Pumping} (taking $R=R_{\beta}$ and $R'=R_{\beta+1}$).

Finally, consider a limit ordinal $\beta>0$, and assume that $R_{\gamma}$ satisfies the four conditions for each $\gamma<\beta$.  Note that $R_{\beta}=\bigcup_{\gamma<\beta}R_{\gamma}$.  Lemma \ref{Lemma:UnionUFD} will give us all four properties for $R_{\beta}$, once we verify that prime factorizations stabilize.  Given any nonzero, nonunit element $a\in R_{\beta}$, then $a\in R_{\gamma}$ for some $\gamma<\beta$.  By our inductive hypothesis, $R_{\gamma}$ is a UFD, and thus $a$ has a prime factorization in $R_{\gamma}$; write it as
\[
a=q_{1}q_2\cdots q_m.
\]
If $q_1$ is not associate to any $p_i$ with $i<\beta$, then using condition (2) and the inductive assumption, $q_1$ is prime in $R_{\delta}$ for each $\gamma\leq \delta<\beta$.  On the other hand, if $q_1$ is associate to some $p_i$ with $i<\beta$, then $q_1$ factors into two primes in $R_{i}$, which remain prime in $R_{\delta}$ for each $i\leq \delta < \beta$, by the inductive assumption and condition (1).  The same factorization process works for $q_j$, for each integer $j\in [1,m]$.  Thus, the prime factorization of $a$ in $R_{\gamma}$ may factor further, but it stabilizes after a finite number of splittings.
\end{proof}

\section{Main construction}\label{Section:MainConstruction}

We will need the following crucial fact about arbitrary Euclidean norms.

\begin{lemma}\label{Lemma:KeyMinPrime}
Let $R$ be a \textup{(}possibly transfinite\textup{)} Euclidean domain, with a Euclidean norm $\varphi$.  If $b\in R-\{0\}$ minimizes $\varphi(b)$ subject to $b\notin U(R)$, then $b$ is prime.
\end{lemma}
\begin{proof}
As $b\in R$ is not a unit, fix a prime $a\in R$ with $a\mid b$.  Given any ``quotient'' $q\in R$, then the ``remainder'' $a-qb$ is also divisible by $a$, hence it is not a unit.  If $a-qb\neq 0$, we cannot have $\varphi(a-qb)<\varphi(b)$, from the minimality assumption.  Thus, from the definition of a Euclidean domain, we must have $b\mid a$, and therefore $b$ is prime (being associate to $a$).
\end{proof}

Still assuming the hypotheses of Lemma \ref{Lemma:KeyMinPrime}, an immediate consequence is that if $p\in R$ is prime with $\varphi(p)$ minimized, then when dividing any numerator by the denominator $p$, we can find a quotient whose corresponding remainder is either zero or a unit.

More can be said when $\varphi$ is a multiplicative norm to a compatibly well-ordered monoid.

\begin{lemma}\label{Lemma:SuboptimalMultComp}
Let $R$ be a \textup{(}possibly transfinite\textup{)} Euclidean domain, with a multiplicative Euclidean norm $\varphi$ to a compatibly well-ordered monoid.  If $p\in R$ is any prime with $\varphi(p)$ minimized, then
\[
\varphi(p^2)\leq \varphi(r)
\]
whenever $r\in R-\{0\}$ has at least two prime factors.  Consequently, when performing the Euclidean division algorithm with denominator $p^2$, there always exists a quotient whose corresponding remainder is zero, a unit, or a prime.
\end{lemma}
\begin{proof}
Let $r\in R-\{0\}$ have at least two prime factors, and write a partial factorization $r=p_1r'$ with $p_1$ prime.  Since $p_1,r'\in R-\{0\}$ are not units, by Lemma \ref{Lemma:KeyMinPrime}  we have $\varphi(p)\leq \varphi(p_1)$ and $\varphi(p)\leq \varphi(r')$.  From the multiplicativity and compatibility assumptions, we find
\[
\varphi(p^2)=\varphi(p)\varphi(p)\leq \varphi(p_1)\varphi(p)\leq \varphi(p_1)\varphi(r')=\varphi(p_1r')=\varphi(r).
\]
The last sentence of the lemma immediately follows.
\end{proof}

To prove Theorem \ref{Thm:Main}, it suffices to find a Euclidean domain $R$ where the last sentence of Lemma \ref{Lemma:SuboptimalMultComp} fails.  Thus, for every prime $p\in R$, we want to be able to choose a special numerator $t\in R$ (depending on $p$) that is not divisible by $p^2$, such that no matter which quotient $q\in R$ we consider, the remainder $t-qp^2$ always has at least two prime divisors.

It is not hard to guarantee the existence of a numerator $t\in R$ not divisible by $p$, and where $t-qp^2$ cannot be a unit.  Thus, the only problem that arises is when $t-qp^2$ itself is prime.  The simple solution is then to apply the methods from the previous section and split any such prime into two new primes.  However, there are two main resulting complications.  First, splitting a prime introduces new elements that can act as quotients.  This problem is overcome by iterating the splitting process recursively.  Second, the resulting ring may fail to be a Euclidean domain.  This problem is handled by introducing a lot of units, making it easier to find remainders with small norms.

We are now prepared to construct an example of a ring $R$ satisfying Theorem \ref{Thm:Main}.  Throughout the remainder of this section, all notations that are introduced will continue to retain their fixed meaning when used in results.

Let $F$ be a field of characteristic zero.  The assumption of characteristic zero will only be used in the proof of Proposition \ref{Prop:EnoughStableDivisors}.

The construction consists entirely of recursively passing to a polynomial ring in multiple indeterminates, followed by passing to a subring of the field of fractions.  All indeterminates are assumed to be independent of one another over $F$.  The indeterminates come in three types with various subscripts.  They are introduced at each successor stage $k+1$ of a recursion, with $k\in \N$.  They are:
\begin{itemize}
\item $s$-type:  These are indeterminates $s_{k+1,p}$, where $p$ runs over of a certain set of primes from the $k$th stage.  The $s$-indeterminates are used to split primes into products of two new prime factors.
\item $t$-type:  These are indeterminates $t_{k+1}$.  The $t$-indeterminates are used to give us access to generic numerators.
\item $u$-type:  These are indeterminates $u_{k+1,a,b}$, where $a$ and $b$  run over certain elements from the $k$th stage.  The $u$-indeterminates help us form certain primes that, at the very end of the construction, we localize into units.  This helps guarantee that the Euclidean algorithm terminates.
\end{itemize}

To begin, let $R_0:=F$. View $R_0$ as the first step in a recursive construction of UFDs
\[
R_0\subseteq R_1\subseteq R_2\subseteq\ldots.
\]
We will put $R_{\infty}:=\bigcup_{k\in \N}R_k$, and the final ring $R$ that we construct will be a localization of the ring $R_{\infty}$.

Some primes in $R_k$ are forced to remain prime at each later stage of the recursion to prevent $R_{\infty}$ from failing to be a UFD, and they also remain prime after the final localization when pasing from $R_{\infty}$ to $R$; these are called the \emph{stable primes}, the set of which is denoted $S_k$.  Some of the primes in $R_k$ will split into two stable primes at the next stage; these are called the \emph{temporary primes}, the set of which is denoted $T_k$.  Finally, some primes will remain prime at each later stage of the recursion, but they will be inverted when passing from $R_{\infty}$ to $R$; these are called the \emph{unit primes}, the set of which is denoted $U_k$.

The sets $S_k$, $T_k$, and $U_k$ partition the primes of $R_k$ (except that we allow empty parts).  We will guarantee that if some prime $p\in R_k$ belongs to one of these three sets, then all its associates belong to the same set.  At the first stage of the recursion take $S_0=T_0=U_0=\emptyset$ (as there are no primes in $F$).

For notational ease, once the set of temporary primes $T_k$ is constructed for a given integer $k\geq 0$, fix (once and for all) a subset $T_k^{\ast}\subseteq T_{k}$ consisting of exactly one prime from each of the association classes in $T_{k}$.

We now explain how the recursion proceeds at successor steps.  Suppose that for some integer $k\geq 0$ we have been given:
\begin{itemize}
\item an $F$-algebra $R_{k}$ that is a UFD, and whose units are $F^{\ast}:=F-\{0\}$, and
\item a partition of the primes of $R_k$ into three sets $S_k$, $T_k$, and $U_k$, each closed under multiplication by $F^{\ast}$.
\end{itemize}
Define $R'_{k+1}$ to be the polynomial ring obtained by adjoining to $R_k$ the following three types of indeterminates:
\begin{itemize}
\item[(1)] $s_{k+1,p}$, for each $p\in T_{k}^{\ast}$,
\item[(2)] $t_{k+1}$, and
\item[(3)] $u_{k+1,a,b}$, for each pair of elements $a,b\in R_k-\{0\}$, where $\gcd_{R_k}(a,b)=1$, and $b$ is not divisible (in $R_k$) by any element of $T_k$, nor by the square of any element in $S_k$.
\end{itemize}
Hereafter, subscripts on these variables will automatically be assumed to belong to the appropriate sets, subject to the restrictions above.

Finally, fix
\[
R_{k+1}:=R'_{k+1}[p/s_{k+1,p}\, :\, p\in T_{k}^{\ast}],
\]
which is a subring of the fraction field of $R'_{k+1}$.   In fact,  by Proposition \ref{Prop:IteratedSplitting}, $R_{k+1}$ is a UFD containing $R_k$, whose units are still exactly $F^{\ast}$.  Moreover, the stable primes in $S_k$ and the unit primes in $U_k$ all stay prime in $R_{k+1}$, and each of the temporary primes in $T_{k}$ has split into two new primes.

For notational convenience, we take $s'_{k+1,p}:=p/s_{k+1,p}$, which is the conjugate of $s_{k+1,p}$.  Now, fix the new set of stable primes to be
\[
S_{k+1}:=S_k \cup (F^{\ast}\cdot \{s_{k+1,p},s'_{k+1,p}\, :\, p\in T_{k}^{\ast}\}).
\]
Also, fix the new set of unit primes to be
\[
U_{k+1}:=U_k\cup (F^{\ast}\cdot \{a-u_{k+1,a,b}b\, :\, u_{k+1,a,b}\in R_{k+1}'\}),
\]
Note that $a-u_{k+1,a,b}b$ really is prime in $R_{k+1}$, since this is a linear polynomial in the indeterminate $u_{k+1,a,b}$, and the two coefficients are nonzero and relatively prime (in $R_k$, and hence also in $R_{k+1}$).  Clearly, the unit primes are not $F^{\ast}$ multiples of the stable primes.  (The unit primes will become units at the very end of the construction.)  Notice that with these definitions, we have two nondecreasing chains
\[
S_0\subseteq S_1\subseteq S_2\subseteq \ldots
\]
and
\[
U_0\subseteq U_1\subseteq U_2\subseteq\ldots
\]
of sets of primes.  Finally, fix $T_{k+1}$ to be the set of all remaining primes in $R_{k+1}$ (and fix a transversal $T_{k+1}^{\ast}$ of the association classes in $T_{k+1}$).

This finishes the recursion.  We now take
\[
R_{\infty}:=\bigcup_{k\in \N}R_k,\qquad S_{\infty}:=\bigcup_{k\in \N}S_k,\qquad U_{\infty}:=\bigcup_{k\in \N}U_k.
\]
By Lemma \ref{Lemma:UnionUFD}, the ring $R_{\infty}$ is a UFD, the set of whose primes is $S_{\infty} \cup U_{\infty}$, which we continue to call the stable primes and unit primes, respectively.

Note that $R_{\infty}$ is a subring of the Laurent polynomial ring in the $s$-type indeterminates, with coefficients that are polynomials over $F$ in the other indeterminates.  Viewed this way, any element $r\in R_{\infty}$ can be written in the unique form $c/d$, where $d$ is a (finite, possibly empty) product of $s$-type indeterminates, $c$ is an $F$-linear combination of distinct monomials in the indeterminates (with nonzero coefficients), and no $s$-type variable that occurs in $d$ also occurs in all the monomials in the support of $c$.

\begin{definition}
Given any $r\in R_{\infty}$, the \emph{rank} of $r$ is the smallest integer $k\geq 0$ such that $r\in R_{k}$, denoted by $\rank(r)$.
\end{definition}

The rank of any indeterminate is its first subscript.  Given any $r\in R_{\infty}-F$, then its rank is easy to determine; it is the largest of all the ranks of the indeterminates that appear in its reduced form.    Given a unit prime $a-u_{k+1,a,b}b$, note that $u_{k+1,a,b}$ is the unique indeterminate of maximal rank that appears in (at least one monomial in the support of) the prime; we will call $u_{k+1,a,b}$ the \emph{ranking indeterminate} of the unit prime.  The following lemma describes a useful fact about ranks.

\begin{lemma}\label{Lemma:Ranks}
The rank of a nonempty product of unit primes is the maximum of the ranks of those primes.
\end{lemma}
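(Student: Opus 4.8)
The plan is to prove the two inequalities separately. The upper bound is immediate: writing $q_1,\ldots,q_n$ for the unit primes in question and $M=\max_i\rank(q_i)$, each $q_i$ lies in $R_M$, and since $R_M$ is a ring the product $q_1\cdots q_n$ lies in $R_M$ as well, giving $\rank(q_1\cdots q_n)\leq M$.

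For the reverse inequality the idea is to single out one ranking indeterminate and show it cannot be cancelled away. I would fix a factor of maximal rank, say $q_j$, and write it (up to an $F^{\ast}$-multiple) as $a-u\,b$, where $u=u_{M,a,b}$ is its ranking indeterminate and $a,b\in R_{M-1}$ with $b\neq 0$. By definition of the ranking indeterminate, $u$ is the unique indeterminate of rank $M$ occurring in $q_j$, and it occurs linearly. The key step is then to view the whole product as a polynomial in the single indeterminate $u$ and classify the factors accordingly: any factor of rank $<M$ is free of $u$; any factor of rank $M$ whose ranking indeterminate is not $u$ has the form $a'-u'\,b'$ with $a',b'\in R_{M-1}$ and $u'\neq u$, hence is also free of $u$; and each factor whose ranking indeterminate equals $u$ contributes a term that is linear in $u$ with nonzero leading coefficient (an $F^{\ast}$-multiple of $-b$). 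Because the indeterminates are algebraically independent over $F$ and $R_{\infty}$ is a domain, the $u$-degree of $q_1\cdots q_n$ is exactly the number of factors whose ranking indeterminate is $u$ — at least one, coming from $q_j$ — and its leading coefficient is a product of nonzero elements, hence nonzero.

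Consequently $u$ genuinely appears in $q_1\cdots q_n$. Since $u$ is a $u$-type variable it occupies the numerator and is untouched by the cancellation of $s$-type factors that produces the reduced form $c/d$, so $u$ survives into the reduced form. As $u$ has rank $M$, this forces $q_1\cdots q_n\notin R_{M-1}$, i.e.\ $\rank(q_1\cdots q_n)\geq M$, and combining with the upper bound gives equality. The one point that must be handled with care is precisely the non-cancellation of the top $u$-degree term — this is where the algebraic independence of the $u$-type indeterminate from all the other indeterminates, together with the nonvanishing of each coefficient $b$, is essential; everything else is bookkeeping.
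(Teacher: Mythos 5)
Your proof is correct and follows essentially the same route as the paper's: the upper bound is the trivial closure of $R_M$ under products, and the lower bound comes from showing that a maximal-rank ranking indeterminate cannot cancel out of the product, which you do by the same degree-additivity-in-a-domain argument the paper uses (you just carry it out more explicitly for the single indeterminate $u$, whereas the paper phrases it for all non-$s$-type indeterminates at once). No gaps.
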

\begin{proof}
Consider any nonempty product $\prod_{i=1}^{n}a_i$ of nonzero elements from $R_{\infty}$, for some integer $n\geq 1$.  Write the reduced form for $a_i$ as $c_i/d_i$.  Once $c:=\prod_{i=1}^{n}c_i$ is expanded, it is an $F$-linear combination of distinct monomials (with nonzero coefficients), where the indeterminates in those monomials also appeared in $c_1,c_2,\ldots, c_n$.  Moreover, any indeterminate appearing in some $c_i$ must still appear in at least one monomial in the support of $c$.  Note that $d=\prod_{i=1}^{n}d_i$ is a monomial in $s$-type variables.

The only thing that would prevent $c/d$ from being the reduced form for the product is that some $s$-type variable in the denominator could appear in each of the monomials in the support of the numerator, and cancel off.  The other two types of indeterminates do not cancel off, and in particular if each $a_i$ is a unit prime, the ranking indeterminates all appear in the reduced form.  The largest rank of those ranking indeterminates clearly bounds the ranks of all other indeterminates in the expression $c/d$.
\end{proof}

The previous lemma applies to more general situations.  However, note that it is false when applied to stable primes instead of unit primes, for if $s_{k+1,p}$ is any $s$-type variable, then $p$ factors as $s_{k+1,p}\cdot (p/s_{k+1,p})$, but $p$ has smaller rank than either of the factors.  Of course, the rank of a product can \emph{never} be bigger than the maximum of the ranks of the factors.

Another example of using rank considerations is as follows.  Let $r\in R_{\infty}-F$ have rank $k\geq 1$.  Let $x$ be a $t$-type or $u$-type indeterminate of rank $k$.  Write $r$ in its unique form $c/d$.  Treating $c$ as a polynomial in the indeterminate $x$, we can then write $c=c_0+c_1x + \cdots + c_nx^n$ for some integer $n\geq 0$, where each $c_i$ is a polynomial in the other indeterminates that appear in $c$.  We claim that $c_i/d\in R_{\infty}$ for each integer $i\in [0,n]$.  To see this, note that in the recursive definition of $R_{k}$ we could just as easily have adjoined the single indeterminate $x$ after all the other adjunctions, and so indeed any element of $R_{k}$ is a polynomial in the variable $x$.  When $x$ is an $s$-type indeterminate, we can treat $r$ as a Laurent polynomial in $x$ and again conclude that each coefficient belongs to $R_{\infty}$ (by essentially the same argument).

Note that the result of the previous paragraph may fail when $x$ has rank smaller than $r$.  For example, $1-t_1\in R_1$ is a temporary prime in $R_1$, and so $(1-t_1)/s_{2,1-t_1}$ is an element of $R_2$.  Writing this element as a polynomial in the variable $x:=t_1$, we have
\[
\frac{1}{s_{2,1-t_1}} +\frac{-1}{s_{2,1-t_1}}t_1,
\]
but the coefficient $1/s_{2,1-t_1}$ does \emph{not} belong to $R_2$ (or even $R_{\infty}$).

We are now ready to prove the following key property of the ring $R_{\infty}$.

\begin{prop}\label{Prop:EnoughStableDivisors}
Given any $p,q,v\in R_{\infty}$ with $p\in S_{\infty}$ and $v\in F^{\ast}\cdot \langle U_{\infty}\rangle$, then
\[
vt_{\rank(p)+1}-qp^2
\]
has at least two prime factors from $S_{\infty}$, counting multiplicity.
\end{prop}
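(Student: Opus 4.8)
The plan is to exploit that the indeterminate $t:=t_{\rank(p)+1}$ is itself a temporary prime of $R_{\rank(p)+1}$: being a $t$-type variable, it is neither stable nor of unit-prime type, so at the next stage it splits as $t=s\,s'$ into two non-associate stable primes $s:=s_{\rank(p)+2,\,t}$ and $s':=s'_{\rank(p)+2,\,t}$. More generally, by Proposition~\ref{Prop:IteratedSplitting} every temporary prime of a stage $R_n$ splits into exactly two (non-associate) stable primes at stage $n+1$, and those pieces stay stable thereafter, while stable and unit primes persist unchanged. Consequently, if we factor $w:=vt_{\rank(p)+1}-qp^{2}$ inside a single $R_n$ containing it, the number of its stable prime factors in $R_\infty$ (with multiplicity) equals $S_n+2T_n$, where $S_n$ and $T_n$ count, with multiplicity, the stable and temporary prime factors of $w$ in $R_n$. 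Thus it suffices to produce, at some finite stage, either one temporary prime factor of $w$ or two stable ones; equivalently, to rule out that $w$ is a scalar times a product of unit primes, and that $w$ is a scalar times a single stable prime times a product of unit primes.

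First I would record the easy valuation computations. Since $p$ is stable while $v$ is a scalar times a product of unit primes and $t_{\rank(p)+1}$ is a temporary prime, $p$ divides neither $v$ nor $t_{\rank(p)+1}$; as $p^{2}\mid qp^{2}$, the $p$-adic valuation gives $v_p(w)=0$, so $p\nmid w$. For the conjugates, $v_s(vt_{\rank(p)+1})=1$ while $v_s(qp^{2})=v_s(q)$, so $s\mid w\iff s\mid q$, and symmetrically $s'\mid w\iff s'\mid q$. In particular, if both $s\mid q$ and $s'\mid q$ we are already done, since then $s$ and $s'$ are two distinct stable factors of $w$.

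The heart of the positive argument is the principal case in which $q$ and $v$ have rank at most $\rank(p)$, so that neither involves $t_{\rank(p)+1}$, $s$, or $s'$, and $w$ is linear in the free variable $t_{\rank(p)+1}$ of $R_{\rank(p)+1}$. Writing $g:=\gcd(v,qp^{2})$ in $R_{\rank(p)}$ and factoring $w=g\,w_0$, the cofactor $w_0=(v/g)\,t_{\rank(p)+1}-(qp^{2}/g)$ is primitive and linear in $t_{\rank(p)+1}$, hence prime in $R_{\rank(p)+1}$ by Gauss's lemma. Its only rank-$(\rank(p)+1)$ variable is the $t$-type variable $t_{\rank(p)+1}$ rather than a $u$-type variable, so $w_0$ is neither stable nor a unit prime; it is therefore temporary, splits into two stable primes, and $w$ acquires at least two stable factors. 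Here $p^{2}$ plays no special role; the construction has simply \emph{forced} $w_0$ to factor by adjoining a splitting variable for it.

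The hard part is the general case, where $q$ and $v$ may have arbitrarily high rank and may genuinely involve $t_{\rank(p)+1}$ and its conjugates $s,s'$, so that $w$ is linear in no convenient variable and the content argument above does not apply. I would attack this by ruling out the two forbidden shapes directly: suppose $w=c\,\pi^{\varepsilon}\prod_i\rho_i$ with $c\in F^{\ast}$, $\pi$ a stable prime, $\varepsilon\in\{0,1\}$, and the $\rho_i$ unit primes. Feeding this into the valuation identities forces $\pi$ to be associate to at most one of $s,s'$, so some conjugate, say $s$, divides neither $q$ nor $w$; reducing modulo it in the domain $R_\infty/(s)$ yields $\bar\pi^{\varepsilon}\prod_i\bar\rho_i\sim \bar q\,\bar p^{\,2}$. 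I expect the contradiction to come from the defining restriction on unit primes---that the ``$b$''-part of each $a-u_{\bullet,a,b}b$ is divisible neither by any temporary prime nor by the square of any stable prime---which is exactly what should be incompatible with the square $p^{2}$ on the right-hand side, and this is also where the standing hypothesis $\operatorname{char}F=0$ should enter, to keep the relevant cofactor squarefree so that multiplicities survive the reductions. Making this incompatibility precise, together with organizing the descent from arbitrary rank to the principal case (peeling off top-rank variables via the coefficient-extraction facts while controlling the stable-factor count), is the crux of the proof.
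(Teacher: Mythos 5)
There is a genuine gap: you have correctly reformulated the goal (it suffices to find either one temporary prime factor or two stable ones at some finite stage, since temporary primes split into exactly two stable primes while stable and unit primes persist), and your ``principal case'' is sound --- when $\rank(v),\rank(q)\le\rank(p)$, the primitive part of $w$ is linear in $t_{\rank(p)+1}$, hence prime by Gauss's lemma, and it can be neither stable nor a unit prime because its unique top-rank indeterminate is $t$-type, so it is temporary and splits. This is a clean alternative to the paper's Case 1 (though note the paper's Case 1 also covers $\rank(v)$ or $\rank(q)$ equal to $\rank(p)+1$, where $w$ need not be linear in $t_{\rank(p)+1}$ and your Gauss's-lemma step does not directly apply). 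But the case you defer as ``the crux'' --- when $v$ or $q$ has rank exceeding $\rank(p)+1$ --- is precisely where the paper's proof does its real work (its Cases 2 and 3), and you have not supplied that argument. The paper handles it not by reducing modulo a conjugate $s$, but by extracting coefficients of $w$ with respect to the top-rank \emph{ranking} indeterminate $u=u_{\ell,a,b}$ of a unit prime factor of $v$ (or the top-rank variable of $q$): if $u$ did not appear in the reduced form of $w$, comparing the coefficient of $u$ (when $p\mid b$) or of $u^{\alpha}$ (when $p\nmid b$) forces $p^2\mid b$ or $p\mid vt_{\rank(p)+1}$, contradicting respectively the square-free restriction on the $b$-subscript and the fact that $v$ has only unit prime factors; characteristic zero enters to ensure $\alpha\ne 0$ as a coefficient. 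Hence $u$ appears in some prime factor of $w$, which by rank considerations must be temporary (it cannot be $a-ub$ itself since $\gcd(q,v)=1$), and one concludes as before.

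Your proposed route for this hard case --- passing to $R_\infty/(s)$ and deriving $\bar\pi^{\varepsilon}\prod_i\bar\rho_i\sim\bar q\,\bar p^{\,2}$ --- is not clearly workable as stated: $R_\infty/(s)$ is a domain but is not shown to be a UFD, and the images $\bar p$, $\bar\rho_i$ are not known to remain prime there, so no multiplicity or divisibility contradiction can be extracted without substantial additional machinery. You have correctly identified \emph{which} hypotheses must be used (the restriction that $b$ is not divisible by a temporary prime or the square of a stable prime, and $\operatorname{char}F=0$), but identifying them is not the same as deploying them; the coefficient-extraction mechanism that actually converts those hypotheses into a contradiction is missing from your write-up, and it constitutes the bulk of the paper's proof.
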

\begin{proof}
Fix $k:=\rank(p)$, and fix $r:=vt_{k+1}-qp^2$.  After removing any common (unit prime) factors of $v$ and $q$, we may as well assume $\gcd_{R_{\infty}}(q,v)=1$.  There are three main cases.
\bigskip

\emph{Case 1}: $k+1\geq \max(\rank(v),\rank(q))$.  This means that $t_{k+1}$ cannot appear in any prime factor of $v$ (in $R_{\infty}$), because the corresponding ranking indeterminate would then have rank greater than $k+1$, making $\rank(v)>k+1$ by Lemma \ref{Lemma:Ranks}.

Next, assume by way of contradiction that $t_{k+1}$ does not appear in the reduced form for $r$.  As the reduced form for $vt_{k+1}$ is linear in the indeterminate $t_{k+1}$, the same must be true for $q$.  As $q\in R_{k+1}$, we can then write $q=q_0+q_1 t_{k+1}$, where $q_0,q_1\in R_{k+1}$ do not have $t_{k+1}$ in their reduced forms.  Now, since $t_{k+1}$ does not appear in $r$, we must have $v=q_1 p^2$.  Then $p\mid v$, which is impossible (as $v$ has only unit prime factors).

So, $t_{k+1}$ does appear in the reduced form for $r$.  Therefore, it must appear in one of the prime factors of $r$ in $R_{k+1}$.  It cannot appear in a stable prime (since that would involve an $s$-type variable of larger rank, hence the factor wouldn't belong to $R_{k+1}$).  It cannot appear in a unit prime (since that would involve a $u$-type variable of larger rank).  Thus, it belongs to a temporary prime, which factors into two stable primes at the next stage, and so $r$ has at least two stable prime factors in $R_{\infty}$.
\bigskip

\emph{Case 2}: $k+1<\rank(v)$ and $\rank(q)\leq \rank(v)$.  Fix $\ell:=\rank(v)$.  By Lemma \ref{Lemma:Ranks}, there is a unit prime factor $a-u_{\ell,a,b}b$ of $v$. Fix $u:=u_{\ell,a,b}$, and write $v=(a-ub)^{\alpha}v'$, for some integer $\alpha\geq 1$ and some $v'\in R_{\infty}$ with $(a-ub)\nmid v'$.  (In other words, $\alpha$ is the $(a-ub)$-adic valuation of $v$.)  Notice that $u$ does not appear in $v'$, by rank considerations.

Next, assume by way of contradiction that $u$ does not appear in the reduced form for $r$.  There are two subcases to consider.  First, we may have $p\mid b$.  Notice that from the definition of the $u$-type variables, this means that $p\nmid a$.  Now, treating $v$ as a polynomial in the variable $u$, the coefficient of the linear term is $-\alpha a^{\alpha-1}bv'$.  On the other hand, as $q\in R_{\ell}$, we can write
\[
q=q_0+q_1 u + \text{ higher order terms}
\]
where $q_0,q_1,\ldots \in R_{\ell}$, and $u$ does not appear in the reduced form of any $q_i$.  Thus, since $u$ does not appear in the reduced form for $r$, we must have $p^2q_1 = -\alpha a^{\alpha-1}bv't_{k+1}$.  As we are in characteristic zero, the image of $\alpha$ in $R_{\infty}$ is not zero.  Hence, $p^2$ divides $b$ (since $p\nmid av't_{k+1}$).  This contradicts the fact that the third subscript on a $u$-type variable cannot be divisible by the square of a stable prime.

The second subcase is when $p\nmid b$.  Now, looking at the coefficient of $u^{\alpha}$, rather than the linear coefficient, we obtain a similar contradiction.

Thus, we now know that $u$ does appear in the reduced form for $r$, hence in one of its prime factors in $R_{\ell}$.  It cannot appear in a stable prime, or in a unit prime where $u$ is not the ranking indeterminate (by considering ranks, as in the last paragraph of Case 1).  Also, $a-ub$ cannot divide $r$, else it divides $q$, contradicting the fact that $\gcd_{R_{\infty}}(q,v)=1$.  Thus, $u$ appears in a temporary prime, and we are done as in Case 1.
\bigskip

\emph{Case 3}: $k+1<\rank(q)$ and $\rank(v)<\rank(q)$.  Fix $\ell:=\rank(q)$.  If there is a $t$-type variable of this rank in $q$, then we finish as in the last paragraph of Case 1.  If there is a variable $u:=u_{\ell,a,b}$ of this rank in $q$, then we finish as in the last paragraph of Case 2, except when $r=(a-ub)^{\alpha}r'$, where $r'\in R_{\ell}$ does not have $u$ in its reduced form.  (We must consider this possibility, since we do not have $(a-ub)\mid v$.)  If $p\nmid b$, then considering the coefficient of $u^{\alpha}$ we must have $p\mid r'$.  But then $p\mid r$, and so $p\mid vt_{k+1}$, which is impossible.  If $p\mid b$, then considering the coefficient of $u$, we must have $p\mid r'$, leading to the same contradiction.

Thus, we may reduce to the case that there are no $t$-type nor $u$-type variables of rank $\ell$ in $q$.  Therefore, there is a variable $s:=s_{\ell,p'}$ that must appear in the reduced form for $q$, and hence for $r$.  It must then also appear in at least one of the prime factors of $r$ in $R_{\ell}$.  If such a factor is a temporary prime, we are done as before.  If such a factor is a unit prime, this contradicts the fact that we are working in $R_{\ell}$ since the ranking variable would have rank bigger than $\ell$.  So, we reduce to the case that any such prime factor must be $s$ or its conjugate.  If they both appear as factors, we are done.  So we may assume (by way of contradiction) that one of them does not appear.  By symmetry considerations, it suffices to consider the case when $s$ is a factor of $r$, but not its conjugate.

Writing $q$ as a Laurent polynomial in the indeterminate $s$, then it is of the form
\[
q=s^{-m}(q_0+q_{1}s+\cdots + q_n s^n),
\]
for some integers $m,n\geq 0$, where each $q_i\in R_{\ell}$ does not have $s$ appear in its reduced form (for each integer $i\in [0,n]$), and where $q_n$ is nonzero and $n$ is minimal (so $m$ is also minimal).   If $m>0$, then $s$ appears in the denominator of some prime factor of $r$.  That factor must be a temporary prime by rank considerations (since the conjugate of $s$ is not allowed), which we already ruled out.  Thus $m=0$, and since $s|r$ this forces $vt_{\rank(p)+1}=q_0p^2$, which is impossible since $p$ does not divide the left side.
\end{proof}

As already mentioned, $R_{\infty}$ is a UFD.  We finally invert the unit primes and consider the new ring $R:=U_{\infty}^{-1}R_{\infty}$.  This is a UFD where the primes of $R$ are exactly the associates (in $R$) of the elements in $S_{\infty}$ (i.e., the primes in $R_{\infty}$ that were not inverted).

\begin{cor}
The ring $R$ does not possess a multiplicative Euclidean norm $\varphi$ to a compatibly well-ordered monoid.
\end{cor}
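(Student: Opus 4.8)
The plan is to argue by contradiction. Suppose $\varphi\colon R-\{0\}\to M$ is a multiplicative Euclidean norm, where $(M,\cdot,1)$ is a monoid carrying a well-ordering compatible with its multiplication in the sense of \eqref{Eq:Compatibility}. Since $R$ is commutative, the elements of the image $\varphi(R-\{0\})$ commute pairwise, so I may replace $M$ by the submonoid it generates and assume $M$ is commutative. The first task is a normalization: I want to show that $\varphi(1)$ is both the identity of $M$ and the minimum value of $\varphi$, so that $\varphi(a)\geq \varphi(1)$ for every nonzero $a$. Writing $\mu$ for the (well-ordered) minimum of $\varphi(R-\{0\})$ and $\mu=\varphi(c)$, the Euclidean property forces $c$ to be a unit: otherwise $c\nmid 1$ would yield $q$ with $\varphi(1-qc)<\mu$, which is impossible unless $1-qc=0$. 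From $\varphi(c)\varphi(c^{-1})=\varphi(1)$, together with the idempotence of $\varphi(1)$ and a short manipulation using only \eqref{Eq:Compatibility}, I expect to conclude $\mu=\varphi(1)$ and hence that every norm is at least $\varphi(1)$.

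With the normalization in hand, choose a stable prime $p_0\in S_{\infty}$ of minimal norm over all of $S_{\infty}$, and set $k:=\rank(p_0)$. I would then run the division step of the Euclidean algorithm on the pair $(t_{k+1},\,p_0^2)$. The noncongruence $p_0^2\nmid t_{k+1}$ holds because $t_{k+1}$ is a temporary prime that splits into two stable primes of rank $k+2$ (so $p_0$, having rank $k$, is not one of its factors); hence there is $q\in R$ with $\varphi(t_{k+1}-qp_0^2)<\varphi(p_0^2)=\varphi(p_0)^2$, and the difference $r:=t_{k+1}-qp_0^2$ is nonzero. To analyze $r$, I would write $q=q'/w$ with $q'\in R_{\infty}$ and $w$ a product of unit primes, so that $wr=wt_{k+1}-q'p_0^2$ with $w\in F^{\ast}\cdot\langle U_{\infty}\rangle$. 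Proposition \ref{Prop:EnoughStableDivisors} then guarantees that $wr$ has at least two prime factors from $S_{\infty}$, counting multiplicity. Passing from $R_{\infty}$ to $R=U_{\infty}^{-1}R_{\infty}$, the unit primes (and $w$) become units while the stable primes stay prime, so $r$ itself is a unit times a product of at least two primes, each lying in $S_{\infty}$.

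Finally I would combine this factorization with multiplicativity and compatibility. Since $\varphi(p_0)$ is minimal over $S_{\infty}$ and every norm is $\geq\varphi(1)$, repeated application of \eqref{Eq:Compatibility} to the factorization of $r$ gives $\varphi(r)\geq\varphi(p_0)^2$, contradicting the strict inequality $\varphi(r)<\varphi(p_0)^2$ from the division step. The main obstacle I anticipate is the normalization step: because compatibility is only assumed for the non-strict order, I cannot freely cancel or sharpen inequalities, so pinning down that $\varphi(1)$ is simultaneously the identity and the least value of $\varphi$---and verifying that the ``extra'' stable and unit factors of $r$ never drag its norm below $\varphi(p_0)^2$---must be carried out using only the monotonicity furnished by \eqref{Eq:Compatibility}.
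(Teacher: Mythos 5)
Your argument is correct and follows the same route as the paper: pick a prime $p_0$ of minimal norm, run the division step on $(t_{\rank(p_0)+1},p_0^2)$, clear denominators so that Proposition \ref{Prop:EnoughStableDivisors} applies, and derive a contradiction from multiplicativity, compatibility, and the minimality of $\varphi(p_0)$. The one place you genuinely diverge is the normalization step, and there your route is different from the paper's: the paper argues that $1_M$ is the least element of the \emph{whole} monoid $M$ (via an infinite descending chain $1_M>m>m^2>\cdots$) and then uses $\psi(r)\geq 1_M$, whereas you work only with the minimum $\mu$ of the \emph{image} of $\varphi$, use the Euclidean property to see that $\mu$ is attained at a unit $c$, and normalize against $\varphi(1)$. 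Your sketch does go through, and the ``short manipulation'' you defer is this: for any nonzero $a$, minimality gives $\varphi(c)\leq\varphi(ca)$, and multiplying both sides by $\varphi(c^{-1})$ via \eqref{Eq:Compatibility} yields $\varphi(1)=\varphi(c)\varphi(c^{-1})\leq\varphi(ca)\varphi(c^{-1})=\varphi(a)$; since $\varphi(x)\varphi(1)=\varphi(x)$ for all $x$, the bound $\varphi(s)\geq\varphi(1)$ on the leftover cofactor $s$ is exactly what is needed to conclude $\varphi(p_0^2)\varphi(s)\geq\varphi(p_0^2)$ in the final chain. Note that this is arguably the more careful of the two normalizations: with only the nonstrict compatibility \eqref{Eq:Compatibility}, the hypothesis $1_M>m$ yields only $m\geq m^2$, not $m>m^2$, so the paper's descending chain need not be strictly decreasing (e.g., a two-element monoid $\{1_M,m\}$ with $m^2=m<1_M$ is compatibly well-ordered); your version sidesteps this by never needing $1_M$ to be the global minimum. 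Everything else in your write-up (the justification that $p_0^2\nmid t_{k+1}$, the reduction $q=q'/w$, the passage from stable-prime factors of $wr$ in $R_\infty$ to those of $r$ in $R$) matches the paper and is sound.
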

\begin{proof}
Assume, by way of contradiction, that such a map $\varphi$ exists.  Fix a prime $p\in R$ such that $\varphi(p)$ is as small as possible.  Now, since $\varphi$ is a Euclidean norm, there exists some $q\in R$ with
\[
\varphi(t_{\rank(p)+1}-qp^2)<\varphi(p^2).
\]
However, $t_{\rank(p)+1}-qp^2$ has at least two prime factors in $R$, since (after scaling by a unit prime) it has at least two stable prime factors in $R_{\infty}$ by Proposition \ref{Prop:EnoughStableDivisors}.  This contradicts Lemma \ref{Lemma:SuboptimalMultComp}.
\end{proof}

We finish proving Theorem \ref{Thm:Main} by showing:

\begin{thm}\label{Thm:CoolNorm}
The ring $R$ is a Euclidean domain, with an $\N$-valued Euclidean norm.
\end{thm}
\begin{proof}
Define a function $\varphi\colon R-\{0\}\to \N$ by the rule
\[
u\prod_{i=1}^{k}p_i^{\alpha_i}\mapsto \sum_{i=1}^{k}\alpha_i^2,
\]
where the $p_i$ are nonassociate primes in $R$, the $\alpha_i$ are nonnegative integers, and where $u$ is a unit.  Note that $\varphi$ is invariant when multiplying its argument by a unit, but its value increases after multiplying by a (nonzero) nonunit.

We will show that $\varphi$ is a Euclidean norm on $R$.  To that end, let $a,b\in R$ with $b\neq 0$.  Write $a=ga'$ and $b=gb'$, with $a',b'\in R_{\infty}$ and $g\in R$, and with $\gcd_{R_{\infty}}(a',b')=1$.  (In other words, take $g:=\gcd_R(a,b)$, but since GCDs are only defined up to unit multiples, we also guarantee that the cofactors belong to $R_{\infty}$.)  We may then fix an integer $k\geq 0$ such that $a',b'\in R_{k}$.  Note that we can also guarantee that $\gcd_{R_{k}}(a',b')=1$, after absorbing any common unit primes into $g$.  Further, after increasing $k$ by $1$ if necessary, we may assume that all the prime factors of $a'$ and $b'$ from $R$ already live in $R_k$.  There are three cases to consider.
\bigskip

\emph{Case 1}:  $b'\mid a'$ in $R$.  In this case $b\mid a$, and we are done.  For all remaining cases, we will assume $b'\nmid a'$.  In particular, $b'$ is not a unit in $R$ and $a'\neq 0$.
\bigskip

\emph{Case 2}: $b'$ has no repeated prime factor in $R$.  Taking $q:=u_{k+1,a',b'}$, then $a'-qb'$ is a unit of $R$.  Thus, since $b'$ is not a unit in $R$,
\[
\varphi(a-qb)=\varphi((a'-qb')g)=\varphi(g)<\varphi(gb')=\varphi(b),
\]
as desired.
\bigskip

\emph{Case 3}: $b'$ has a repeated prime factor in $R$ (so $\varphi(b')\geq 4>2$).  Taking
\[
\ell:=\max(\rank(a'),\rank(b'),\rank(g))
\]
and putting $q:=t_{\ell+1}$, then $a'-qb'$ is a temporary prime in $R_{\ell+1}$, and hence it factors as a product of two new, conjugate stable primes in $R_{\ell+2}$, which are thus coprime to $g$ by rank considerations. Therefore,
\[
\varphi(a-qb)=\varphi((a'-qb')g)=2+\varphi(g)<\varphi(b')+\varphi(g)\leq \varphi(b'g)=\varphi(b),
\]
where the last (nonstrict) inequality comes from the fact that $\alpha^2+\beta^2\leq (\alpha+\beta)^2$, for all integers $\alpha,\beta\geq 0$.
\end{proof}

Interestingly, this proof also shows that there is a Euclidean algorithm for $R$ that terminates with a zero remainder after at most three steps.  Also note that $R$ has the same cardinality as $F$, which may be any infinite cardinality since $F$ has characteristic zero.

\section{Monotonicity considerations}\label{Section:MultMono}

If $R$ is a Euclidean domain with a multiplicative Euclidean norm $\varphi$ to a compatibly well-ordered monoid, we have
\begin{equation}\label{Eq:MultMon1}
\forall a,b,c\in R-\{0\},\ \varphi(a)\leq \varphi(b) \implies \varphi(ac)\leq \varphi(bc).
\end{equation}
This condition does not, in itself, required that the codomain of $\varphi$ be a monoid.  Thus, we might ask if every Euclidean domain has a Euclidean norm satisfying \eqref{Eq:MultMon1}.  The answer is still no, by using exactly the same ring as constructed in Section \ref{Section:MainConstruction}.  The only change needed in the proof is to modify Lemma \ref{Lemma:SuboptimalMultComp} to work for norms satisfying \eqref{Eq:MultMon1}.

Jesse Elliott recently raised the question of whether every Euclidean domain has a Euclidean norm satisfying the strict version
\begin{equation}\label{Eq:MultMon2}
\forall a,b,c\in R-\{0\},\ \varphi(a)< \varphi(b) \implies \varphi(ac)< \varphi(bc).
\end{equation}
We will show that the answer is still no, even if the conclusion in \eqref{Eq:MultMon2} is weakened to a nonstrict inequality.  We begin with the following modification of Lemma \ref{Lemma:SuboptimalMultComp}.

\begin{lemma}\label{Lemma:Better}
Let $R$ be a \textup{(}possibly transfinite\textup{)} Euclidean domain, with a Euclidean norm $\varphi$ satisfying the property
\[
\forall a,b,c\in R-\{0\},\ \varphi(a)< \varphi(b) \implies \varphi(ac)\leq \varphi(bc).
\]
If $p\in R$ is any prime with $\varphi(p)$ minimized, then
\[
\varphi(p^2)\leq \varphi(r)
\]
whenever $r\in R-\{0\}$ has at least four prime factors.
\end{lemma}
\begin{proof}
Let $r\in R-\{0\}$ have at least four prime factors.  Write $r=p_1p_2p_3r'$, where $p_1$, $p_2$, and $p_3$ are prime.  By Lemma \ref{Lemma:KeyMinPrime}, $\varphi(p)<\varphi(p_1p_2)$ and $\varphi(p)<\varphi(p_3r')$, since $p_1p_2$ and $p_3r'$ are (nonzero) nonunits that are not prime.  Thus, using the given implication twice,
\[
\varphi(p^2)\leq \varphi(p_1p_2p)\leq \varphi(p_1p_2p_3r')=\varphi(r).\qedhere
\]
\end{proof}

We end with the following extension of Theorem \ref{Thm:Main}.

\begin{thm}
There exists a Euclidean domain $R$, with an $\N$-valued Euclidean norm, such that for each \textup{(}possibly transfinite\textup{)} Euclidean norm $\varphi$ on $R$, then there exist $a,b,c\in R-\{0\}$ with $\varphi(a)<\varphi(b)$ and $\varphi(ac)>\varphi(bc)$.
\end{thm}
\begin{proof}
Follow the construction in Section \ref{Section:MainConstruction}, making the following four changes.  First, use Lemma \ref{Lemma:Better} instead of Lemma \ref{Lemma:SuboptimalMultComp}.  Second, instead of splitting a temporary prime $p$ into two stable primes, split it into four stable primes $s_1$, $s_2$, $s_3$, and $p/(s_1s_2s_3)$, treating the first three as new polynomial variables.  (Here, we suppressed the subscripts expressing the dependence of these primes on $p$, and on the rank of $p$, just for readability.)  Continue to call these four primes conjugates.  Third, at the end of Case 3 of the proof of Proposition \ref{Prop:EnoughStableDivisors}, reduce to the situation where the missing conjugate of $s$ is the one with $s$ in the denominator.   Finally, when defining the Euclidean norm in Theorem \ref{Thm:CoolNorm}, use cubes instead of squares (so that the norm of a squared prime is $8$, which is bigger than $4$).

The proofs in Section \ref{Section:MainConstruction} are written in a way that all other needed changes are minor and easily handled.
\end{proof}

\section{Acknowledgements}

We thank Jesse Elliott for pointing out an interesting reference and raising a question that motivated to the work in Section \ref{Section:MultMono}, we thank Luc Guyot for comments that improved the proof of Lemma \ref{Lemma:Pumping}, we thank Kyle Pratt for many comments that improved the quality of this paper, and we thank the anonymous referee for a thorough and detailed report that improved the quality of this paper.   This work was partially supported by a grant from the Simons Foundation (\#963435 to Pace P.\ Nielsen).

\providecommand{\bysame}{\leavevmode\hbox to3em{\hrulefill}\thinspace}
\providecommand{\MR}{\relax\ifhmode\unskip\space\fi MR }
\providecommand{\MRhref}[2]{%
  \href{http://www.ams.org/mathscinet-getitem?mr=#1}{#2}
}
\providecommand{\href}[2]{#2}

\end{document}